\title{An interplay between the weak form of Peano's theorem and structural aspects of Banach spaces}
\author{by  \vspace{0.3cm}  \\    \textsc{C. S. Barroso, \quad M. P. Rebou\c cas} \\ \textit{\footnotesize Universidade Federal do Cear\'a}  \\ \textit{\footnotesize Fortaleza, CE, Brazil}\vspace{0.1cm} \\ and \vspace{0.3cm} \\ \textsc{M. A. M. Marrocos} \\ \textit{\footnotesize Universidade Federal do Amazonas} \\ \textit{\footnotesize Manaus, AM, Brazil} }
\date{}
\newlength{\hchng}
\newlength{\vchng}
\def \supp {\mathrm{supp } }
\def\spann{\hbox{\tt span}\,}
\newtheorem{theorem}{Theorem}[section]
\newtheorem{lemma}[theorem]{Lemma}
\newtheorem{proposition}[theorem]{Proposition}
\theoremstyle{definition}
\newtheorem{definition}[theorem]{Definition}
\numberwithin{equation}{section}
\newcommand{\intav}[1]{\mathchoice {\mathop{\vrule width 6pt height 3 pt depth  -2.5pt
\kern -8pt \intop}\nolimits_{\kern -6pt#1}} {\mathop{\vrule width
5pt height 3  pt depth -2.6pt \kern -6pt \intop}\nolimits_{#1}}
{\mathop{\vrule width 5pt height 3 pt depth -2.6pt \kern -6pt
\intop}\nolimits_{#1}} {\mathop{\vrule width 5pt height 3 pt depth
-2.6pt \kern -6pt \intop}\nolimits_{#1}}}
\begin{document}
\maketitle

\begin{abstract}
In this paper we establish some new results concerning the Cauchy-Peano problem in Banach spaces. Firstly, we prove that if a Banach space $E$ admits a fundamental biorthogonal system, then there exists a continuous vector field $f\colon E\to E$ such that the autonomous differential equation $u'=f(u)$ has no solutions at any time. The proof relies on a key result asserting that every infinite-dimensional Fr\'echet space with a fundamental biorthogonal system possesses a nontrivial separable quotient. The later, is the byproduct of a mixture of known results on barrelledness and two fundamental results of Banach space theory (namely, a result of Pe{\l}czy\'nski on Banach spaces containing $L_1(\mu)$ and the $\ell_1$-theorem of Rosenthal). Next, we introduce a natural notion of weak-approximate solutions for the non-autonomous Cauchy-Peano problem in Banach spaces, and prove that a necessary and sufficient condition for the existence of such an approximation is the absence of $\ell_1$-isomorphs inside the underline space. We also study a kind of algebraic genericity for the Cauchy-Peano problem in spaces $E$ having complemented subspaces with unconditional Schauder basis. It is proved that if $\mathscr{K}(E)$ denotes the family of all continuous vector fields $f\colon E\to E$ for which $u'=f(u)$ has no solutions at any time, then $\mathscr{K}(E)\bigcup \{0\}$ is spaceable in sense that it contains a closed infinite dimensional subspace of $C(E)$, the locally convex space of all continuous vector fields on $E$ with the linear topology of uniform convergence on bounded sets.

\bigskip

\noindent \textit{MSC\,2010:} 34G20, 34A34, 46B20, 47H10, 34K07.

\medskip

\noindent \textbf{Keywords:} Peano's theorem, Separable Quotient Problem, Fr\'echet spaces, $\ell_1$-isomorphs, weak-compactness in $L_\infty(\mu,E)$, abstract fixed point approximation.

\end{abstract}

\newpage

\section{Introduction}
In \cite{Peano} Peano proved his famous theorem on the existence of solutions for first-order initial value problems in finite-dimensional spaces. The infinite-dimensional version of this result is not true in general. Indeed, the first negative result was given in 1950 by Dieudonn\'e \cite{Dieudonne}, who provided a counterexample in the space $c_0$. Along the years, several researchers have provided invaluable information for this fascinating subject (see, for instance, \cite{Astala, Cellina, Godunov, Lobanov,Szep} and references therein). In 1974 Godunov \cite{Godunov} settled the question concerning the non-validity of Peano's theorem in general Banach spaces. He proved that Peano's theorem is true in a Banach space $E$ iff it is finite dimensional. Generalizations of this result in the context of locally convex and non-normable Fr\'echet spaces were given by Astala \cite{Astala}, Lobanov \cite{Lobanov}, Shkarin \cite{Shkarin1} and others. Also, some existence results have been derived by using non-normable linear topologies (cf. \cite{Szep,Teixeira}). Nowadays, there is a new trend in this research area which aims at studying relationships between geometric and structural aspects of Banach spaces and the weak-form of Peano's theorem (cf. \cite{Hajek-Johanis,Shkarin2}). The weak form of Peano's theorem (WFPT, in short) ensures that if $E$ is finite dimensional and $f\colon \mathbb{R}\times E\to E$ is continuous, then $u'=f(t,u)$ has a solution in some open interval. In \cite{Shkarin2}, Shkarin proved that if $E$ is a Banach space with a complemented subspace having an unconditional Schauder basis, then the WFPT fails to be true. Haj\'ek and Johanis \cite{Hajek-Johanis} extended this result to the class of Banach spaces with an infinite-dimensional separable quotient. Actually, they proved the more stronger statement that there are continuous vector fields $f\colon E\to E$ such that the differential equation $u'=f(u)$ has no solutions at any point (cf. \cite[Theorem 8]{Hajek-Johanis}).

Henceforth $E$ will denote an infinite dimensional Banach space. The first goal of the present research is particularly focused on relationships between WFPT and fundamental biorthogonal systems. We obtain the following result.
\par
\begin{theorem}\label{thm:M1} Assume that $E$ admits a fundamental biorthogonal system. Then there exists a continuous mapping $f\colon E\to E$ such that the autonomous equation $u'=f(u)$ has no solutions at any time.
\end{theorem}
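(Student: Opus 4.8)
The plan is to reduce Theorem~\ref{thm:M1} to the separable quotient problem and then invoke the theorem of Haj\'ek and Johanis. Recall that \cite[Theorem~8]{Hajek-Johanis} produces, for every Banach space possessing an infinite-dimensional separable quotient, a continuous vector field $f\colon E\to E$ for which $u'=f(u)$ has no solution at any point; since this is exactly the conclusion we seek, it suffices to prove that a Banach space admitting a fundamental biorthogonal system has a nontrivial (infinite-dimensional) separable quotient. Thus the whole burden of the proof falls on this structural statement, which I would isolate as the key lemma and prove in the broader Fr\'echet generality announced in the abstract.

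To build the separable quotient I would start from the given fundamental biorthogonal system $\{(x_i,x_i^{*})\}_{i\in I}$ and extract a countable biorthogonal subsystem $(e_n,e_n^{*})_{n\in\N}$ with $\|e_n\|=1$ and $e_m^{*}(e_n)=\delta_{mn}$. The engine is Rosenthal's $\ell_1$-theorem applied to the bounded sequence $(e_n)$, which splits the analysis into two mutually exclusive cases. If some subsequence of $(e_n)$ is equivalent to the unit vector basis of $\ell_1$, then $E$ contains an isomorphic copy of $\ell_1$; here I would call on Pe{\l}czy\'nski's theorem on Banach spaces containing $\ell_1$ to obtain a copy of $L_1(\mu)$ inside $E^{*}$, and then exploit the separability of $L_1(\mu)$ together with a duality/annihilator argument to manufacture a $w^{*}$-closed separable subspace of $E^{*}$, whose preannihilator yields a separable quotient of $E$. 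In the complementary case a subsequence of $(e_n)$ is weakly Cauchy; passing to differences and applying the Bessaga--Pe{\l}czy\'nski selection principle, I would extract a seminormalized basic sequence whose coordinate functionals are restrictions of the biorthogonal functionals $e_n^{*}$, and then dualize to reach a separable quotient.

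The bridge between these constructions and an honest quotient of $E$ --- and the extension of the whole argument to the Fr\'echet setting --- is supplied by the barrelledness machinery: the known equivalences between the failure of barrelledness on a suitable dense subspace (equivalently, the presence of an infinite-dimensional subspace of the dual of the appropriate type) and the existence of an infinite-dimensional separable quotient. I would use these equivalences both to convert the separable data located in $E^{*}$ into a genuine quotient map and to carry the conclusion from Banach spaces to arbitrary infinite-dimensional Fr\'echet spaces carrying a fundamental biorthogonal system; for Theorem~\ref{thm:M1} itself only the Banach case is invoked.

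I expect the main obstacle to be precisely the passage from a \emph{subspace} (or a separable piece of the dual) to a genuine separable \emph{quotient}. Rosenthal's dichotomy and the Bessaga--Pe{\l}czy\'nski selection principle hand one separable subspaces and basic sequences almost for free, but quotients require control of $w^{*}$-closures, and a fundamental biorthogonal system need only have a \emph{fundamental}, not total, dual system over a possibly uncountable index set. Ensuring that the separable object distilled from $E^{*}$ is $w^{*}$-closed --- so that its norm-separability transfers to separability of the corresponding quotient of $E$ --- is where the biorthogonal functionals and the barrelledness results must be combined with care, and this is the delicate heart of the proof.
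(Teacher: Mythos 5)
Your opening reduction --- invoking \cite[Theorem 8]{Hajek-Johanis} so that everything rests on showing that a space with a fundamental biorthogonal system has an infinite-dimensional separable quotient --- is exactly the paper's route (this is its Theorem \ref{thm:M13}). The genuine gap is in the second case of your Rosenthal dichotomy. When $(e_n)$ has a weakly Cauchy subsequence, you pass to differences, extract a seminormalized basic sequence via Bessaga--Pe{\l}czy\'nski, and then propose to ``dualize to reach a separable quotient.'' No known theorem performs that last step: a (weakly null, seminormalized) basic sequence, even with coordinate functionals extended to all of $E^*$, produces a separable \emph{subspace} of $E$ and a biorthogonal sequence in $E^*$, but not the weak$^*$-closed, norm-separable subspace of $E^*$ that corresponds to a separable quotient --- and you yourself flag this passage as unresolved. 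The obstruction is not technical but fundamental: \emph{every} infinite-dimensional Banach space falls into one of your two cases (take any normalized basic sequence; by Rosenthal it has either an $\ell_1$-subsequence or a weakly Cauchy subsequence, whose consecutive differences are weakly null and seminormalized, being differences of distinct terms of a normalized basic sequence). So if your Case 2 could be closed with the tools you cite, you would have solved the Separable Quotient Problem for all Banach spaces, with the fundamental biorthogonal system playing no role whatsoever. Note also that mere norm-separability in the dual cannot be the operative property in your Case 1 either (every dual space contains separable infinite-dimensional subspaces); what makes that case work is that $L_1[0,1]\hookrightarrow E^*$ contains the \emph{reflexive} space $\ell_2$, and reflexive subspaces of duals yield quotients --- this is precisely the paper's endgame. (A minor slip: the differences $e_{n_{k+1}}-e_{n_k}$ are not biorthogonal to restrictions of the $e_n^*$'s; the functional $e^*_{n_{k+1}}$ takes the value $1$ at the $k$-th difference and $-1$ at the $(k+1)$-st.)

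The paper sidesteps your problematic case by arguing by contradiction, and it is there that the biorthogonal system and barrelledness do real work. Suppose $E$ has no infinite-dimensional separable quotient. By the characterization you allude to (Proposition 4.6.5 in \cite{Carreras-Bonet}, i.e.\ Saxon--Wilansky), every proper dense subspace of $E$ is then \emph{barrelled}; this applies to $F=\spann\{e_\gamma\}_{\gamma\in\Gamma}$, which is dense because the system is fundamental and proper because its elements have finite support. Barrelledness of $F$ is used twice: via the closed graph theorem, the coordinate map $S(x)=(e^*_\gamma(x))_\gamma$ is continuous from $(F,\|\cdot\|)$ into $\ell_1(\Gamma)$; and the unit ball of the norm $\||x|\|=\sum_{\gamma}|e^*_\gamma(x)|$, being convex, balanced, absorbing and $\|\cdot\|$-closed, is a barrel, hence a neighborhood of $0$. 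Thus the two norms are equivalent, so $F$ --- and by density plus Hahn--Banach, $E$ itself --- has Schur's property. Rosenthal's theorem \cite{Rosenthal} then forces $\ell_1\subset E$ (an infinite-dimensional Schur space admits no seminormalized weakly null sequence, so your Case 2 is annihilated by the contradiction hypothesis), and Pe{\l}czy\'nski \cite{Pe} gives $L_1[0,1]\hookrightarrow E^*$, hence $\ell_2\hookrightarrow E^*$, hence $\ell_2$ is a quotient of $E$ --- contradicting the assumption. If you want to repair your proposal, this inversion of the logic (use the no-quotient hypothesis to force barrelledness of the dense span, then Schur, then land in your Case 1) is the missing idea.
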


The proof is based on an apparently new contribution for the Separable Quotient Problem (SQP, in short), see Theorem \ref{thm:M13}, and a result of Haj\'ek and Johanis \cite[Theorem 8]{Hajek-Johanis} which connects SQP to the validity of the WFPT in Banach spaces.

\par
In our second aim, we consider the non-autonomous  Cauchy-Peano problem
\begin{equation}\label{eqn:1}
u'=f(t,u),\quad t\in I\quad \text{ and }\quad u(t_0)=u_0\in E
\end{equation}
where $I=[0,T]$. Here $f\colon [0,T]\times E\to E$ is a Caracth\'eodory vector field, i.e., one with the following properties:
\begin{itemize}
\item[$(f_1)$] for all $t\in I$, $f(t,\cdot)\colon E\to E$ is continuous,
\item[$(f_2)$] for all $x\in E$, $f(\cdot,x)\colon E\to E$ is measurable.
\end{itemize}

We identify a structural condition which characterizes the existence of weak-approximate solutions (WAS, in short) to the problem (\ref{eqn:1}), see Definition \ref{def:WAS} for WAS. More precisely, for a large class of Caracth\'eodory vector fields, we obtain a characterization of the existence of WAS in terms of $\ell_1$-containements. In the explanation below, we describe the class of fields that will be addressed here. Let $I\subset \mathbb{R}$ be as above and denote by $\mathfrak{X}(I,E)$ the family of all Caracth\'eodory vector fields $f\colon I\times E\to E$ fulfilling the following growth condition:\vskip .1cm
\begin{itemize}
\item[$(\star)$] $\| f(s,x)\|_E\leq \alpha(s)\varphi(\|x\|_E)$ for a.e. $s\in I$ and every $x\in E$,
where $\alpha$ and $\varphi$ satisfy the following properties: (a) $\alpha\in L_1[0,T]$; and (b) $\varphi\colon [0,\infty)\to (0,\infty)$ is a nondecreasing continuous function satisfying
\[
\int_0^T \alpha(s)ds<\int_0^\infty \frac{ds}{\varphi(s)}.
\]
\end{itemize}

\begin{definition}[WAS]\label{def:WAS} Let $f\in \mathfrak{X}(I,E)$ be given. A sequence $(u_n)\subset C(I,E)$ of continuous $E$-valued functions on $I$ is said to be a weak-approximate solution of (\ref{eqn:1}) if:
\begin{itemize}
\item[(i)] Each $u_n$ is almost everywhere strongly differentiable in $I$,
\item[(ii)] Both $(u_n)$ and $(u'_n)$ are bounded sequences in $C(I,E)$,
\item[(iii)] $(u_n)$ is weakly Cauchy and satisfies $\displaystyle u_n-\int f(s,u_n(s))ds \rightharpoonup u_0$ in  $C(I,E)$,
\item[(iv)] $u_n(t)-u_0\in \overline{\spann}\left( f(I\times E)\right)$ for all $t\in I$ and $n\in\mathbb{N}$.
\end{itemize}
\end{definition}

\par

Our second main result is as follows.
\begin{theorem}\label{thm:M2} Problem (\ref{eqn:1}) always have WAS for $f\in \mathfrak{X}(I,E)$ if and only if $E$ contains no subspace isomorphic to $\ell_1$.
\end{theorem}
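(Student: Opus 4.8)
The plan is to recast problem \eqref{eqn:1} as a weak approximate fixed point problem for the Volterra-type operator $T\colon C(I,E)\to C(I,E)$ given by $(Tu)(t)=u_0+\int_{t_0}^t f(s,u(s))\,ds$, and to read both implications off a single principle: a bounded set in $C(I,E)$ admits weak approximate fixed points for such operators precisely when it contains no sequence equivalent to the $\ell_1$-basis. A sequence $(u_n)$ is then a WAS exactly when $u_n-Tu_n\rightharpoonup 0$ holds together with the regularity and localization (i), (ii), (iv); indeed $u_n-\int f(s,u_n(s))\,ds=u_n-Tu_n+u_0$, so $u_n-Tu_n\rightharpoonup 0$ is equivalent to the second requirement in (iii).

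For the sufficiency direction I would first exploit the growth condition $(\star)$. Writing $A(t)=\int_{t_0}^t\alpha(s)\,ds$ and invoking the Wintner--Bihari inequality forced by (b), I obtain an a priori constant $M$ such that every function produced by iterating $T$ stays in the ball of radius $M$. I then fix the closed bounded convex set $\mathcal C\subset C(I,E)$ of those $u$ with $u(t)-u_0\in F:=\overline{\spann}\big(f(I\times E)\big)$, with $\|u(t)\|\le M$, and with $\|u(t)-u(t')\|\le \varphi(M)\,|A(t)-A(t')|$. Since $\alpha\in L_1$, the absolute continuity of its integral makes this modulus uniform, so $\mathcal C$ is equicontinuous and $T(\mathcal C)\subset\mathcal C$. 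The decisive point is that $E\not\supseteq\ell_1$, through Rosenthal's $\ell_1$-theorem, renders every pointwise section $\{u(t):u\in\mathcal C\}$ weakly precompact in $E$; combined with equicontinuity, a vector-valued Arzel\`a--Ascoli argument upgrades this to weak precompactness of $\mathcal C$ in $C(I,E)$. On such a set the abstract fixed point approximation produces a weakly Cauchy sequence $(u_n)\subset\mathcal C$ with $u_n-Tu_n\rightharpoonup 0$; choosing $u_n=Tv_n$ in the range of $T$ makes each $u_n$ absolutely continuous with $\|u_n'(s)\|\le\alpha(s)\varphi(M)$ (so that, after the time change $t\mapsto A(t)$ normalizing the derivative bound to $\varphi(M)$, conditions (i)--(ii) hold), while (iv) is immediate from $\mathcal C\subset u_0+F$.

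For the necessity direction I would argue by contraposition, using the failing half of Rosenthal's theorem. If $E\supseteq\ell_1$, fix $(x_k)$ equivalent to the $\ell_1$-basis; inside the closed convex hull of $(x_k)$ there is a continuous self-map with no weak approximate fixed points, precisely because the $\ell_1$-basis admits no weakly Cauchy subsequence. Following the reduction behind the classical Peano counterexamples, I would realize this map through an autonomous field $f\in\mathfrak X(I,E)$ (bounded, so $(\star)$ holds with constant $\alpha,\varphi$) for which one can never have $u_n-Tu_n\rightharpoonup 0$ with $(u_n)$ weakly Cauchy; hence \eqref{eqn:1} possesses no WAS for this $f$.

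The hard part will be the transfer of weak precompactness from $E$ to $C(I,E)$ and the extraction of weak approximate fixed points for a merely continuous, non-compact operator $T$. One cannot apply Rosenthal directly in $C(I,E)$, since already $C(I)\subset C(I,E)$ contains $\ell_1$; it is exactly the equi-Lipschitz bound inherited from $(\star)$ that suppresses these $C(I)$-directional copies of $\ell_1$ and isolates the $E$-directional behaviour controlled by the hypothesis. The delicate technical layer is the weak-compactness analysis of the superposition terms $s\mapsto f(s,u_n(s))$ in $L_\infty(\mu,E)$, equivalently the uniform integrability of $\{f(\cdot,u_n(\cdot))\}$ in $L_1(I,E)$, which is what allows the abstract approximation scheme to close without ever passing to a limit inside the nonlinearity $f$.
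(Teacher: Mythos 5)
Your sufficiency direction is essentially the paper's: an a priori bound forced by $(\star)$, an invariant bounded convex equicontinuous set for the Volterra operator, and an abstract weak-approximate-fixed-point principle (in the paper, Proposition 3.7 of Barroso--Kalenda--Lin) whose hypothesis is the absence of $\ell_1$-sequences in that set. Your way of verifying this hypothesis --- Rosenthal applied to the pointwise sections plus equicontinuity, upgraded by a vector-valued Arzel\`a--Ascoli argument --- is in fact sounder than the paper's own step, which cites a Cembranos-type result in a form that is backwards: $I=[0,T]$ is \emph{not} scattered, so $C(I,X)$ \emph{always} contains a copy of $\ell_1$ whatever $X$ is, and it is precisely the equicontinuity of the invariant set (which you exploit, and the paper does not) that rescues the argument. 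You correctly flag the transfer of weak precompactness from $E$ to $C(I,E)$ as the delicate step; it can be closed by representing functionals in $C(I,E)^*$ as vector measures with $w^*$-densities with respect to their variation and applying scalar dominated convergence, so that bounded, pointwise weakly Cauchy, equicontinuous sequences are weakly Cauchy in $C(I,E)$.

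The genuine gap is in your necessity direction. You propose to take a continuous self-map of the closed convex hull of an $\ell_1$-basis having no weak approximate fixed points, and to ``realize'' it as a field $f\in\mathfrak{X}(I,E)$ for which one can never have $u_n-Tu_n\rightharpoonup 0$ with $(u_n)$ weakly Cauchy. No reduction of the classical Peano-counterexample type does this: the constructions of Dieudonn\'e, Cellina and Godunov convert a fixed-point-free map into a field with \emph{no solutions}, which is strictly weaker than \emph{no WAS}, and the no-weak-approximate-fixed-point property of a map $g$ on a subset of $E$ does not transfer to the Volterra operator $T$ on $C(I,E)$; for instance, an approximate-solution sequence for $u'=g(u)-u$ need not produce approximate fixed points of $g$. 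The missing bridge --- and the heart of the paper's necessity proof --- is the Schur property together with the weak sequential completeness of $\ell_1$: starting from Cellina's field $f_G(t,x)=2tG(x/t^2)$, which merely lacks solutions, condition (iv) of the WAS definition localizes any putative WAS inside the copy $X\approx\ell_1$, where weakly Cauchy sequences converge in norm; Lebesgue dominated convergence then upgrades the WAS to a genuine solution of Cellina's problem, a contradiction. Without this upgrading argument (or an honest construction of a field in $\mathfrak{X}(I,E)$ whose Volterra operator provably admits no weak approximate fixed point sequence satisfying (i)--(iv)), your necessity does not close.
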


This is a kind of generalization of Theorem 4.2 in \cite{Barroso}. The proof given here relies on three important results from the functional analysis: (1) A fundamental characterization of weak compactness in $L_\infty(\mu,E)$ due to Schl\"uchtermann \cite{Schluchtermann}; (2) A characterization of reflexiveness due to Cellina \cite{Cellina}, and (3) the famous Rosenthal's $\ell_1$-theorem.

The third goal of this paper is concerned with the analysis of a kind of genericity of solutions for abstract differential equations. Let $C(E)$ be the locally convex space of all continuous vector fields on $E$, endowed with the linear topology $\mathscr{T}_{uc}$ of uniform convergence on bounded sets. Denote by $\mathscr{K}(E)$ the family of all vector fields in $C(E)$ for which (\ref{eqn:1}) does not have solutions at any time. The central question we address here is: Does $\mathscr{K}(E)\bigcup\{0\}$ contains an infinite dimensional $\mathscr{T}_{uc}$-closed vector space? This suggests the following definition of algebraic genericity.

\begin{definition} We say a property $(P)$ is algebraically generic for $\mathscr{K}(E)$ if $\mathscr{K}(E)\bigcup\{0\}$ contains an infinite dimensional $\mathscr{T}_{uc}$-closed vector space $L$ such that it holds for all the non-zero vector fields in $L$.
\end{definition}

In this direction, we obtain the following result.

\begin{theorem}\label{thm:M3} Assume that $E$ contains a complemented subspace with an unconditional Schauder basis. Then the property of the non-validity of the WFPT is algebraically generic for $\mathscr{K}(E)$.
\end{theorem}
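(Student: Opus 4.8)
The plan is to assemble $L$ from countably many independent copies of the single counterexample field, placed on disjoint blocks of the unconditional basis, and then to use the block structure to transfer the no-solution property to every nonzero element of the closed span. Write $E=X\oplus Y$ where $X$ carries an unconditional Schauder basis $(e_n)$, and let $P\colon E\to X$ be the associated (bounded) projection. Fix a partition $\mathbb{N}=\bigsqcup_{k\ge 1}N_k$ into infinitely many infinite sets, set $X_k=\overline{\spann}\{e_n:n\in N_k\}$, and let $P_k\colon E\to X_k$ be the block projection, which is bounded precisely because $(e_n)$ is unconditional; denote by $C$ the unconditional constant. Each $X_k$ is infinite dimensional and separable, hence is an infinite dimensional separable quotient of itself, so \cite[Theorem 8]{Hajek-Johanis} furnishes a continuous field $g_k\colon X_k\to X_k$ for which $v'=g_k(v)$ has no solution at any point of $X_k$. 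I then set $f_k:=g_k\circ P_k\colon E\to X_k\subset E$, a continuous vector field on $E$ whose range lies in $X_k$ and which depends only on $P_k$. Note that $P_jf_k=\delta_{jk}f_k$ and that $f_k\not\equiv 0$, since otherwise $v'=g_k(v)$ would admit constant solutions.

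Next I would define $L:=\overline{\spann}^{\,\mathscr{T}_{uc}}\{f_k:k\ge 1\}\subset C(E)$; it is $\mathscr{T}_{uc}$-closed by construction and infinite dimensional because the $f_k$ are linearly independent (apply $P_j$). The core of the proof is the explicit description of the elements of $L$. Writing $p_R(f)=\sup_{\|x\|\le R}\|f(x)\|$ for the defining seminorms, one has $p_R(P_j\circ f)\le Cp_R(f)$, so $f\mapsto P_j\circ f$ is $\mathscr{T}_{uc}$-continuous. Hence, if $g\in L$ is a limit of finite sums $\sum_k a_k^{(n)}f_k$, continuity of this map gives $P_jg=\lim_n a_j^{(n)}f_j=a_jf_j$ for a scalar $a_j$ (the scalar limit exists because $f_j\neq 0$). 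Moreover every $g\in L$ has range contained in the closed subspace $X$, so each value admits the unconditional block expansion $g(x)=\sum_k P_kg(x)=\sum_k a_kf_k(x)$. Thus every $g\in L$ is of the form $g=\sum_k a_kf_k$ with $P_jg=a_jf_j$ for all $j$.

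Finally I would check $L\setminus\{0\}\subset\mathscr{K}(E)$. Fix $g=\sum_k a_kf_k\in L\setminus\{0\}$ and choose $j$ with $a_j\neq 0$. Suppose toward a contradiction that $u$ solves $u'=g(u)$ on some nondegenerate interval with $u(0)=x_0$. Applying the bounded projection $P_j$ and using $P_jf_k=\delta_{jk}f_k$ and $f_j=g_j\circ P_j$, the function $v:=P_ju$, which takes values in $X_j$, satisfies $v'=P_jg(u)=a_jg_j(P_ju)=a_jg_j(v)$ with $v(0)=P_jx_0$. The time rescaling $w(t):=v(t/a_j)$ then yields a solution of $w'=g_j(w)$ with $w(0)=P_jx_0$, contradicting the fact that $v'=g_j(v)$ has no solution at any point of $X_j$. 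Therefore $u'=g(u)$ has no solution at any point, i.e. $g\in\mathscr{K}(E)$, so $L\subset\mathscr{K}(E)\cup\{0\}$ is the required infinite dimensional $\mathscr{T}_{uc}$-closed space. The main obstacle is the representation step of the second paragraph: one must rule out that passing to the $\mathscr{T}_{uc}$-closure creates fields whose values escape $X$ or whose block components fail to be scalar multiples of the $f_k$, and this is exactly what the $\mathscr{T}_{uc}$-continuity of the block projections together with the unconditionality of $(e_n)$ secures.
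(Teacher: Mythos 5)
Your proof is correct, but it takes a genuinely different route from the paper's. Both arguments share the same skeleton --- split the unconditional basis into infinitely many infinite blocks $X_k$, place on each block a field admitting no solutions, and kill a putative solution of a nonzero combination by applying the block projection and rescaling time --- but the two key ingredients differ. First, the source of the block fields: the paper invokes Shkarin's theorem (\cite[Corollary 1.5]{Shkarin2}), which produces on each $X_i$ a field $f_i$ that is $\alpha$-H\"older with a prescribed constant, whereas you invoke \cite[Theorem 8]{Hajek-Johanis}, observing that each separable infinite-dimensional block $X_k$ is trivially its own nontrivial separable quotient; your fields $g_k$ are merely continuous. Second, the construction of the closed subspace: the paper defines an explicit operator $T\colon \ell_1\to C(E)$, $T((a_n))=\sum_n a_n f_n(\pi_n(P\,\cdot\,))$, whose very well-definedness and $\mathscr{T}_{uc}$-continuity rest on the uniform H\"older estimates (the paper explicitly flags this uniform continuity as ``a crucial step''), and then argues that $T$ has $\mathscr{T}_{uc}$-closed range; you instead take $L$ to be the closed span of the $f_k$ and recover the block-diagonal form of an arbitrary $g\in L$ \emph{a posteriori}, from the $\mathscr{T}_{uc}$-continuity of $f\mapsto P_j\circ f$ (so $P_j\circ g=a_jf_j$, one-dimensional subspaces being closed) together with the fact that the values of $g$ lie in $X$ and hence admit unconditional block expansions $g(x)=\sum_k P_kg(x)=\sum_k a_kf_k(x)$. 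This is the genuine gain of your approach: you never need an infinite sum $\sum_k a_k f_k$ to converge in $C(E)$ --- your coefficient sequence $(a_k)$ need not even be summable --- so the uniform-continuity machinery becomes unnecessary; what you lose is the explicit description that the paper's construction provides, namely $L$ as an isomorphic copy of $\ell_1$ consisting of H\"older-continuous fields. Both arguments still need unconditionality in exactly the same place: it is what makes the block projections $\pi_k$, and hence your $P_k=\pi_k\circ P$, bounded; a conditional basis would not do.
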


The source of inspiration for this result is an idea present, for instance, in the recent papers \cite{Aron1,Aron2,Barroso2}, which suggest the usefulness in studying lineability and spaceability issues. In particular, Theorem \ref{thm:M3} generalizes \cite[Theorem 2.1]{Barroso2}. The proof borrows some ideas from \cite{Barroso2}, but the details are  realized via a different and rather simple way.  Indeed, the proof consists in obtaining a countable family of linearly independent vector fields $\{f_n\}$ on $E$ for which the system of ODEs $u'=f_n(u)$ treated as an uncoupled system, has no solution at any time in $E$. To this intended, a special role is played by a result of Shkarin \cite{Shkarin2} concerning Osgood's Theorem in Banach spaces having complemented subspaces with unconditional basis. At this point, it is worth to point out the uniform continuity of the family $\{f_n\}$ which is a crucial step of the proof. This is thanks to the aforementioned result.

\subsection{Preliminary notation and definitions} \label{subsec:1.1}\rm
Before starting the proofs, we recall some basic notation. All of the Banach spaces we consider are over the reals. Given a Banach space $X$ with norm $\|\cdot\|_X$, we will denote the closed unit ball by $B_X$. If $A\subset X$ is nonempty, we will denote the linear span of $A$ by $\spann(A)$, and the closed linear span by $\overline{\spann}(A)$. Subspaces of Banach spaces are understood to be closed infinite dimensional subspaces. $(e_i)$ stands for the usual unit ball of $\ell_1$, the vector space of all summable sequences of real numbers. As usual, we also write $X\approx Y$ to denote that the spaces are isomorphic. A Banach space $X$ is said to contain an isomorphic copy of $\ell_1$ if there is a basic sequence $(x_n)\subset X$ such that $\{x_n\}$ and $\{e_i\}$ are equivalent. This in turn is equivalent to the existence of constants $C_1,C_2>0$ such that for all scalars $t_1,t_2,\dots, t_n$
\[
C_1^{-1} \sum_{i=1}^n|t_i|\leq \left\| \sum_{i=1}^n t_i x_i\right\|_X\leq C_2 \sum_{i=1}^n|t_i|.
\]
Recall that a series $\sum x_n$ in a Banach space $X$ is unconditionally convergent if $\sum\epsilon_n x_n$ converges for all choices of signs $\epsilon_n=\pm 1$. An a basic sequence $(x_n)$ is said to be unconditional if for every $x\in\overline{\rm{span}}(\{x_n\})$, its expansion $x=\sum a_n x_n$ converges unconditionally. The reader is referred to \cite{Fabian} for more background in Banach space theory.

\section{The Separable Quotient Problem and the weak form of Peano's theorem}
\hskip .5cm
The Separable Quotient Problem asks whether every infinite dimensional Banach space $E$ has a non-trivial separable quotient, i.e., a closed infinite-dimensional subspace $M$ so that $E/M$ is linearly isometric to a separable infinite dimensional Banach space. This still open problem is essentially due to Banach and Pe{\l}czy\'nski. Many special spaces (e.g. if $E$ is either separable or reflexive) are known to have nontrivial separable quotients. The works of Johnson-Rosenthal \cite{JR}, Hagler-Johnson \cite{HJ} and Argyros-Dodos-Kanellopoulos \cite{ADK} are certainly among the most important contributions for this problem. We would like also to refer the reader to the survey article of Mujica \cite{Mujica}, the recent work of \'{S}liwa \cite{Sliwa} and references therein for other important contributions. In this section we get the following result, which seems to be an apparently new progress for the (SQP). It also leads us to a proof of Theorem \ref{thm:M1}.
\begin{theorem}\label{thm:M13} Every infinite dimensional Fr\'echet space with a fundamental biorthogonal system has a non-trivial separable quotient.
\end{theorem}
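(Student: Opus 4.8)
The plan is to combine a barrelledness-driven reduction to the Banach-space case with a Rosenthal dichotomy applied to a countable sub-system of the given fundamental biorthogonal system $\{(x_\alpha,f_\alpha)\}_{\alpha\in A}$. Since $E$ is a Fr\'echet space it is barrelled, and I would first dispose of the non-normable case: if $E$ is not normable then, by Eidelheit's theorem (the standard barrelledness input), $E$ admits the separable infinite-dimensional space $\omega=\R^{\N}$ as a quotient, and we are done. Hence we may assume $E$ is an infinite-dimensional Banach space. If $A$ is countable then $\overline{\spann}\{x_\alpha\}=E$ forces $E$ to be separable, and a separable infinite-dimensional Banach space trivially has a nontrivial separable quotient. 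The substantive case is therefore $E$ nonseparable, where I extract a countable subsystem $(x_n,f_n)_{n\in\N}$, normalized so that $\|f_n\|_{E^*}=1$ and $f_n(x_m)=\delta_{nm}$.

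Next I would apply Rosenthal's $\ell_1$-theorem to the bounded sequence $(x_n)\subset E$, splitting into two cases. In the first case a subsequence of $(x_n)$ is equivalent to the unit vector basis of $\ell_1$, so that $E$ contains an isomorphic copy of $\ell_1$. By a classical duality argument this yields an isomorphic copy of $L_1[0,1]$ inside $E^*$; at this point Pe\l czy\'nski's theorem on Banach spaces containing $L_1(\mu)$ applies to $E^*$, producing a complemented copy of $\ell_1$ in $E^*$ and, via the dual correspondence, a $C[0,1]$-type quotient of $E$. Since $C[0,1]$ is separable and infinite-dimensional, this delivers the desired nontrivial separable quotient.

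The remaining case is the one in which no subsequence of $(x_n)$ spans $\ell_1$, so that by Rosenthal $(x_n)$ has a weakly Cauchy subsequence. Passing to it (and relabelling) and forming the seminormalized, weakly null differences $h_k=x_{2k}-x_{2k-1}$, biorthogonality gives $f_{2k}(h_j)=\delta_{kj}$, so that $(h_k)$ is a basic sequence whose coordinate functionals live in $E^*$. The aim would then be to use these coordinate functionals to build a continuous linear operator $S\colon E\to c_0$ (or into a separable sequence space) with $S(h_j)=e_j$, so that $\mathrm{Im}(S)$ is infinite-dimensional, and to read off the separable quotient $E/\ker S$.

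The main obstacle is precisely this last step. Exhibiting a continuous injection of a quotient of $E$ into a separable space is not sufficient, since nonseparable Banach spaces can inject continuously into separable ones; what is needed is that $S$ have closed, infinite-dimensional range (equivalently, that the induced map on $E/\ker S$ be bounded below), so that $E/\ker S$ is genuinely separable and infinite-dimensional. This is exactly where the fundamental biorthogonal system is indispensable: the relations $f_n(x_m)=\delta_{nm}$ pin down the coordinates and block degeneration of the range, and it is here that the weak-compactness information furnished by the ``no $\ell_1$'' alternative (through Rosenthal) must be converted into an honest open map. I expect the verification of non-degeneracy of $\mathrm{Im}(S)$ — together with the bookkeeping required to keep the coordinate functionals weakly null while preserving biorthogonality with $(h_k)$ — to be the technical heart, and the principal difficulty, of the argument.
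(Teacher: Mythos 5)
Your proposal does not prove the theorem: the second Rosenthal alternative (the weakly Cauchy case) is left open, and the missing step is not a removable technicality but exactly the open-problem-level difficulty. Building the operator $S\colon E\to c_0$ is easy (pass to a basic subsequence of $(h_k)$, extend its coordinate functionals by Hahn--Banach, and set $S(x)=(\tilde h_k^*(x))_k$, so that $S(h_j)=e_j$), but such operators exist on \emph{every} infinite-dimensional Banach space --- the Josefson--Nissenzweig theorem already provides a bounded operator into $c_0$ with non-degenerate coordinate data --- and $E/\ker S$ inherits no separability from this; if this construction sufficed, the Separable Quotient Problem itself would be solved. Nothing in your sketch, in particular not the biorthogonality relations ``pinning down the coordinates,'' supplies the mechanism needed to make the induced map bounded below, and no such mechanism is known for a countable biorthogonal subsystem alone. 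Note also that by passing to a countable subsystem you discard precisely the property the paper exploits: \emph{fundamentality} (density of the span of the whole system). Two secondary gaps: (a) a fundamental biorthogonal system need not be bounded, so with $\|f_n\|_{E^*}=1$ the sequence $(x_n)$ may be unbounded and Rosenthal's theorem does not apply (this is repairable: since $\Gamma$ is uncountable, some level set $\{\gamma\colon \|x_\gamma\|\,\|f_\gamma\|\le N\}$ is infinite, so a bounded countable subsystem can be extracted); (b) in your first case, an embedding $L_1[0,1]\hookrightarrow E^*$ does \emph{not} produce a complemented copy of $\ell_1$ in $E^*$ (complementation does not transfer through embeddings), and a complemented subspace of $E^*$ gives a quotient of $E$ only if it is weak$^*$-closed with a weak$^*$-continuous projection. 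The correct classical finish is: $\ell_2\hookrightarrow L_1[0,1]\hookrightarrow E^*$, a reflexive subspace of a dual space is weak$^*$-closed by Krein--\v{S}mulian, hence $E/{}_{\perp}\ell_2$ is an infinite-dimensional quotient whose dual is $\ell_2$, so it is isomorphic to $\ell_2$.

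The paper's proof avoids your obstacle with a different key idea. After the same reduction of the non-normable case to an $\omega$-quotient, it argues by \emph{contradiction}: if $E$ had no infinite-dimensional separable quotient then, by the Saxon--Wilansky-type characterization (P\'erez-Carreras--Bonet, Proposition 4.6.5), every proper dense subspace of $E$ would be barrelled; fundamentality makes $F=\mathrm{span}\{e_\gamma\colon\gamma\in\Gamma\}$ such a subspace. Barrelledness is then cashed in twice --- once via the closed graph theorem to show $x\mapsto(e_\gamma^*(x))_\gamma$ is continuous into $\ell_1(\Gamma)$, and once to show the barrel $\{x\in F\colon\sum_\gamma|e_\gamma^*(x)|\le 1\}$ is a neighborhood of $0$ --- yielding that $\sum_\gamma|e_\gamma^*(\cdot)|$ is an equivalent norm on $F$. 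Hence $E$ has the Schur property, so by Rosenthal's theorem $\ell_1\hookrightarrow E$, and then the Pe{\l}czy\'nski/$\ell_2$ argument (your first case, in its corrected form) produces a separable quotient, contradicting the assumption. So your first case does recover the paper's endgame, but the heart of the proof --- converting ``no separable quotient'' into barrelledness of the dense span and then into the Schur property, which reduces everything to the $\ell_1$ case --- is absent from the proposal.
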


\begin{proof}
Let $(E,\tau)$ be a Fr\'echet space with a fundamental biorthogonal system $\{ e_\gamma; e^*_\gamma\}_{\gamma\in\Gamma}$. If $\tau$ is non-normable then in this case, by Theorem 2.6.16 in \cite{Carreras-Bonet}, $(E,\tau)$ has a quotient isomorphic to $\omega$. Thus we may assume the existence of a continuous norm $\|\cdot\|$ giving the topology $\tau$. Without loss of generality, we can also assume that $\|e_\gamma\|= 1$ for all $\gamma\in \Gamma$. Let us suppose the contrary, that $E$ contains no non-trivial infinite-dimensional separable quotient. According to Proposition 4.6.5 of \cite{Carreras-Bonet}, this means that every proper dense subspace of $E$ is barrelled. Thus the linear space $F\colon={\rm{span}}\{e_\gamma\}_{\gamma\in \Gamma}$ is barrelled. A direct computation shows that  $\{e_\gamma\}_{\gamma\in \Gamma}$ is a Hamel Schauder basis of $F$ and $\{e^*_\gamma\}_{\gamma\in\Gamma}$ its corresponding coefficient functionals. Observe also that if $x=\sum_{\gamma\in\Gamma} e^*_\gamma(x)e_\gamma\in F$, then $\|x\|\leq \sum_{\gamma\in\Gamma} |e^*_\gamma(x)|$. Thus, we can then define a norm $\||\cdot|\|$ on $F$ by putting
\[
\||x|\|=\sum_{\gamma\in\Gamma} |e^*_\gamma(x)|,\quad  x=\sum_{\gamma\in\Gamma} e^*_\gamma(x)e_\gamma\in F.
\]

\par
\paragraph{Claim.} $\|\cdot\|$ and $\||\cdot|\|$ are equivalent on $F$. It suffices to show that for some constant $C>0$, $\||x|\|\leq C \|x\|$ for all $x\in F$. We will prove this in two steps.

\par

\paragraph{\it Step one.} The linear mapping $S\colon (F,\|\cdot\|)\to (\ell_1(\Gamma),\|\cdot\|_{\ell_1(\Gamma)})$ given by
$$
S(x)=(e^*_\gamma(x))_\gamma,\quad x\in F
$$
is continuous. Indeed, since $F$ is barrelled, it is enough to show that $S$ has closed graph. Assume then that $u_k\to u$ in $(F,\|\cdot\|)$ and $S(u_k)\to v$ in $(\ell_1(\Gamma),\|\cdot\|_{\ell_1(\Gamma)})$. Let $\varepsilon>0$ be arbitrary. Write $v=(v_\gamma)_{\gamma\in \Gamma}$. Then for all sufficiently large $k$, we get that
\[
\sum_{\gamma\in\Gamma}|e^*_\gamma(u_k)-v_\gamma|<\varepsilon,
\]
and hence we have
\[
|v_\gamma|<\varepsilon + |e^*_\gamma(u_k)|,\quad\forall \gamma\in\Gamma.
\]
Passing to the limit as $k\to\infty$, it follows
\[
|v_\gamma|\leq \varepsilon+|e_\gamma^*(u)|.
\]
Since $\varepsilon>0$ was arbitrary, this implies that $|v_\gamma|\leq |e_\gamma^*(u)|$ for all $\gamma\in\Gamma$. Let $\supp(u)$ denote the support of $u$. Thus if $\gamma\not\in \supp(u)$, then $e_\gamma^*(u)=0$ and so $v_\gamma=0$. Consequently, $\supp(v)$ is finite. On the other hand, the inequality
\[
|e_\alpha^*(u_k) - v_\alpha|\leq \sum_{\gamma\in\Gamma} |e_\gamma^*(u_k) - v_\gamma|= \|S(u_k) - v\|_{\ell_1(\Gamma)}
\]
implies that $e_\alpha^*(u)=v_\alpha$ for all $\alpha\in\Gamma$. Thus,
\[
S(u)=(e_\gamma^*(u))_\gamma=(v_\gamma)_\gamma=v.
\]
This shows that $S$ has closed graph in $(F,\|\cdot\|)\times (\ell_1(\Gamma),\|\cdot\|_{\ell_1(\Gamma)})$ and, hence, is continuous.

\par

\paragraph{\it Step two.} The unit ball $B=\{ x\in F\colon \||x|\|\leq 1\}$ of $(F,\||\cdot|\|)$ is convex, balanced, absorbing and $\|\cdot\|$-closed in $F$. It suffices to prove that it is closed in $(F,\|\cdot\|)$. To prove this, suppose that $(u_k)$ is a sequence in $B$ so that $\|u_k - u\|\to 0$ for some $u\in F$. Since $S$ is continuous, we have
$$
\|S(u_k) - S(u)\|_{\ell_1(\Gamma)}\to 0.
$$
This and the inequality
\[
\||u|\|=\sum_{\gamma\in\Gamma}|e_\gamma^*(u)|=\|S(u)\|_{\ell_1(\Gamma)}\leq \|S(u_k) - S(u)\|_{\ell_1(\Gamma)}+1
\]
imply that $u\in B$.

\par

Now using again the fact that $(F,\|\cdot\|)$ is barrelled, we conclude that $B$ is a neighborhood of $0$ in $(F,\|\cdot\|)$. In particular, this implies the existence of a constant $C>0$ so that $\||x|\|\leq C\|x\|$ for all $x\in F$. Therefore, $(F,\|\cdot\|)$ is isomorphic to $(F,\||\cdot|\|)$. On the other hand, notice that $(F,\||\cdot|\|)$ clearly has Schur's property. Then $(F,\|\cdot\|)$ has itself the same property. The denseness of $F$ in $(X,\|\cdot\|)$ and Hahn-Banach theorem imply that $(X,\|\cdot\|)$ has also Schur's property. Thus, in view of the classical Rosenthal's $\ell_1$-theorem (see \cite{Rosenthal}), $(X,\|\cdot\|)$ contains an isomorphic copy of $\ell_1$. If $\ell_1$ embeds into $X$, then by a result of Pe{\l}czy\'nski \cite{Pe} we have that $L^1[0,1]$ embeds into $X^*$. Hence $\ell_2$ embeds into $X^*$ which implies that $\ell_2$ is a quotient of $X$. This contradiction concludes the proof of theorem.
\end{proof}

\par

\subsection{Proof of Theorem \ref{thm:M1}}

The proof follows immediately from Theorem 8 in \cite{Hajek-Johanis} and Theorem \ref{thm:M13}.\hfill $\square$

\section{Characterization of Banach spaces containing $\ell_1$ in terms of WAS for the Cauchy-Peano problem}
The celebrated dichotomy theorem of Rosenthal \cite{Rosenthal} states that every bounded sequence in a Banach space $E$ either has a weak Cauchy subsequence or admits a subsequence which is equivalent to the unit vector basis of $\ell_1$. In this section, we shall use again this result together a characterization of weak-compactness in $L_\infty(\mu,E)$, due to Schl\"uchtermann \cite{{Schluchtermann}}, as tools to prove Theorem \ref{thm:M2}.
\subsection{Proof of Theorem \ref{thm:M2}}
\noindent{\it Necessity.} Suppose $X$ is a subspace of $E$ which is isomorphic to $\ell_1$. Then $X$ is not reflexive, and by a result of Cellina \cite{Cellina}, there exist a continuous linear functional $\vartheta\in B_{X^*}$ with $\|\vartheta\|=1$, and a continuous map $g\colon B_{X}\to B_{X}$ which is fixed-point free and satisfies for any $x\in B_{X}$, the equality $\langle\vartheta, g(x)\rangle=\frac{1}{2}( \langle \vartheta, x\rangle +1)$. Let $G\colon E\to B_{X}$ be a continuous extension of $g$ to the whole $E$, with range in $B_{X}$. Following Cellina \cite{Cellina}, we define a continuous vector field $f_G\colon \mathbb{R}\times E\to E$ by
\begin{eqnarray*}
f_G(t,x)=\left\{
\begin{array}{lll}
2t G(x/t^2),\quad &&t\neq 0,\\
0,\quad &&t=0.
\end{array}\right.
\end{eqnarray*}
Notice that
\begin{equation}\label{eqn:est1LCDTh}
\|f_G(t,x)\|_{E}\leq 2|t|,\quad \forall t\in\mathbb{R},\,\,\forall x\in E.
\end{equation}
Thus $f_G\in \mathfrak{X}(\mathbb{R},E)$ with $\alpha(t)=2|t|$ and $\varphi\equiv 1$. In \cite{Cellina}, Cellina proved that there is no solution for the Cauchy-problem
\begin{eqnarray}\label{eqn:Cellina1}
\left\{
\begin{array}{lll}
u'(t)=f_G(t,u(t)),\\
u(0)=0.
\end{array}\right.
\end{eqnarray}
This is equivalent to the statement that the integral equation
\begin{eqnarray}\label{eqn:Cellina2}
u(t)= \int_0^t f_G(s,u(s))ds
\end{eqnarray}
does not have solutions.

\par

\paragraph{\it Claim 1.} (\ref{eqn:Cellina1}) does not have WAS. Indeed, suppose to the contrary that (\ref{eqn:Cellina1}) admits an WAS. Then for some bounded interval $I\subset \mathbb{R}$, containing $0$, there is a sequence $(u_n)\subset C(I,E)$ satisfying conditions (i)--(iv) from Definition \ref{def:WAS}. First of all, since $f_G(I\times E)\subset X$, we see from Definition \ref{def:WAS}-(v) that $u_n(t)\in X$ for all $n\in\mathbb{N}$ and $t\in I$. Furthermore, by Definition \ref{def:WAS}-(iii), it follows that
\[
u_n(t)-\int_0^t f_G(s,u_n(s))ds\rightharpoonup 0 \text{ in } X, \quad\forall t\in I.
\]
Now as $X\approx \ell_1$, $X$ shares the same properties of $\ell_1$. In particular, $X$ has Schur's property and is $\sigma(X,X^*)$-sequentially complete. Hence for each $t$,
\begin{eqnarray}\label{eqn:Cellina2}
u_n(t)-\int_0^tf_G(s,u_n(s))ds\to 0 \textrm{ in } X.
\end{eqnarray}
On the other hand, it is easily seen from Definition \ref{def:WAS}-(iv) that each sequence $(u_n(t))_n$ is weakly-Cauchy in $X$. Let $t\in I$ be fixed. As $X$ is $\sigma(X,X^*)$-sequentially complete, it follows that $(u_n(t))$ converges weakly (and so strongly) to some $u(t)\in X$. Using that estimative in (\ref{eqn:est1LCDTh}) and applying Lebesgue Dominated Convergence Theorem, (\ref{eqn:Cellina2}) implies $u(t)=\int_0^t f_G(s,u(s))ds$, $\forall t\in I$. Thus $u$ belongs to $C(I,E)$ and is a solution of (\ref{eqn:Cellina2}). This gives a contradiction to the fact that (\ref{eqn:Cellina1}) does not admit solutions, and completes the proof of the necessity.

\medskip

\noindent{\it Sufficiency.} Informally, the strategy is to show that the mapping $F\colon C(I,E)\to C(I,E)$ given by
\[
F(u)(t)=u_0+\int_0^t f(s,u(s))ds,\quad t\in I
\]
has a weak-approximate fixed point sequence, that is, a sequence $(u_n)$ so that $u_n - F(u_n)\rightharpoonup 0$ in $C(I,E)$. Such a sequence will be a WAS of (\ref{eqn:1}). To this end, let us consider the sets $A, B$ and $C$ defined below:
\medskip
\begin{eqnarray*}
\begin{array}{lll}
&&A=\left\{ u\in C(I,E)\colon \|u(t)\|_E\leq b(t) \textrm{ for a.e. } t\in I\right\},
\\ \\
&&B=\left\{ v\in C(I,E)\colon v(I)\subset W,\, \|v(t)\|_E\leq \alpha(t) \varphi(b(t))\textrm{ for a.e. } t\in I\right\},
\\ \\
&&C=\left\{ u\in A\colon u(t)=u_0 +\displaystyle\int_0^t \hat{u}(s)ds, \textrm{ for a.e. } t\in I \textrm{ and some } \hat{u}\in B\right\},
\end{array}
\end{eqnarray*}
where $W=\overline{\spann}(f(I\times E))$. Here $b\colon [0,\infty)\to \mathbb{R}$ is defined by $b(t)=J^{-1}\big(\int_0^t \alpha(s)ds\big)$,
where $J(z)=\int_{\|u_0\|_E}^z \frac{ds}{\varphi(s)}$. Clearly both $A$ and $B$ are closed convex subsets of $C(I,E)$. While $C$ is bounded and convex. Moreover, it is easy to see that  $F(C)\subset C$. The remainder of the proof is composed of three steps.

\par

\paragraph{\it Step A.} $F$ is demicontinuous. Indeed, suppose that $u_n\to u$ in $C$. We must to show that $F(u_n)\rightharpoonup F(u)$ in $C(I,E)$. By assumption, $u_n(t)\to u(t)$ in $E$ for all $t\in I$. Since $f$ is Carath\'eodory, this implies $f(t,u_n(t))\to f(t,u(t))$ in $E$ for a.e. $t\in I$. On the other hand, condition ($f_2$) shows that $\|f(t,u_n(t))\|_E\leq b'(t)$ for all $t\in I$. So by Lebesgue's Dominated Convergence Theorem, we get
\[
\int_0^t \|f(s,u_n(s)) - f(s,u(s))\|_Eds \to 0, \forall t\in I.
\]
In particular, we have $F(u_n)(t)\to F(u)(t)$ for all $t\in I$. The next claim is a key point to conclude the demicontinuousness of $F$.\vskip .2cm

\paragraph{\it Claim 2.} $K=\left\{ F(u_n)\colon n\in\mathbb{N}\right\}$ is relatively weakly compact. In order to prove this, we need the following characterization of weak compactness in $L_\infty(I,E)$ (see \cite[Theorem 2.7]{Schluchtermann} for its full statement):

\begin{theorem}\label{thm:WC} Let $(\Omega, \Sigma,\mu)$ be a positive and finite measure space. For a bounded subset $K\subset L_\infty(\mu,E)$ the following are equivalent:
\begin{itemize}
\item[(a)] $K$ is relatively weakly compact.
\item[(b)] For any sequence $(v_n)\subset K$ there exist a subsequence $(w_i)$ of $(v_n)$, a function $w\in L_\infty(\mu,E)$ and a set $N\subset \Omega$ with $\mu(N)=0$, such that:
    \begin{itemize}
    \item[(i)] $\forall t\in \Omega\setminus N$, $w_i(t)\to w(t)$ weakly in $E$,
    \item[(ii)] $\forall\,(x_j^*)\subset B_{E^*}$, $\forall\, (t_j)\subset \Omega\setminus N$, there exist subsequences $(x^*_{j_k})$,\, $(t_{j_k})$ such that
        \[
        \lim_{i\to\infty} \lim_{k\to\infty}\langle x^*_{j_k}, \left( w_i - w\right)(t_{j_k})\rangle=0.
        \]
    \end{itemize}
\end{itemize}
\end{theorem}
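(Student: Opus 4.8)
The plan is to reduce both implications to sequential statements through the Eberlein--\v{S}mulian theorem, and to read condition (b)(ii) as a vector-valued form of the Eberlein--Grothendieck double-limit criterion for weak compactness. Throughout I would single out the canonical functionals on $L_\infty(\mu,E)$ of the form $f\mapsto \int_\Omega \langle x^*,f(t)\rangle\,h(t)\,d\mu$ with $x^*\in E^*$ and $h\in L_1(\mu)$; these do not exhaust $L_\infty(\mu,E)^*$, but they separate points and already detect the candidate weak limit, so they are the natural test objects for (i).

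For (a)$\Rightarrow$(b), assume $K$ relatively weakly compact and fix $(v_n)\subset K$. By Eberlein--\v{S}mulian I would first extract a subsequence converging weakly to some $w\in L_\infty(\mu,E)$; testing against the functionals above yields convergence ``in the mean'', i.e. weak convergence of the scalar functions $\langle x^*,w_i(\cdot)\rangle$ in $L_1(\mu)$. Upgrading this to the \emph{pointwise} weak convergence a.e. required by (i) is the delicate part: I would fix a countable norming family for the (separable) closed span of the essential ranges, run a diagonal extraction reinforced by a subsequence principle of Koml\'os/measurable-selection type, and discard a single common null set $N$. Condition (ii) would then be inherited from the hypothesis, since relative weak compactness forces the iterated limits of the scalar pairings $\langle x^*_{j_k},(w_i-w)(t_{j_k})\rangle$ to commute, while (i) makes the limit vanish.

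For the converse (b)$\Rightarrow$(a) it again suffices, by Eberlein--\v{S}mulian, to show every $(v_n)\subset K$ has a weakly convergent subsequence, so I would apply (b) to produce $(w_i)$, $w$ and $N$ and then prove $w_i\rightharpoonup w$ in $L_\infty(\mu,E)$. The crux is to show that (i) and (ii) together control an arbitrary $\phi\in L_\infty(\mu,E)^*$. Here I would invoke a description of the dual by finitely additive $E^*$-valued set functions (in the spirit of Dinculeanu), approximate the action of $\phi$ on the bounded set $\{w_i\}\cup\{w\}$ by finite sums of evaluation-type functionals $\langle x_j^*,(\cdot)(t_j)\rangle$, and then run an interchange-of-limits argument: any persistent oscillation of $(\phi(w_i))$ would manufacture sequences $(x_j^*)\subset B_{E^*}$ and $(t_j)\subset\Omega\setminus N$ along which the two iterated limits of the pairings disagree, contradicting (ii).

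I expect the main obstacle to be precisely this bridge between the concrete condition (ii), tested only on points and functionals, and genuine weak convergence against the intractable dual $L_\infty(\mu,E)^*$. Since that dual is strictly larger than $L_1(\mu,E^*)$, the approximation of a general $\phi$ by evaluation functionals must simultaneously control the singular, finitely additive part and stay compatible with the null set $N$; it is exactly here that the combinatorial strength of the double-limit condition, rather than the mere pointwise weak convergence of (i), becomes indispensable. Demonstrating that (i) alone is insufficient while (i) together with (ii) suffices is the heart of the equivalence.
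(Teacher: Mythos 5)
First, a point of calibration: the paper does not prove this statement at all. It is imported verbatim as Theorem 2.7 of Schl\"uchtermann (J. Funct. Anal. \textbf{125} (1994), 379--388) and used as a black box in the proof of Theorem \ref{thm:M2}. So your attempt must be judged against Schl\"uchtermann's original argument, not against anything contained in this paper.

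Your proposal correctly identifies the skeleton (Eberlein--\v{S}mulian reduction to sequences, reading (b)(ii) as a Grothendieck double-limit condition), but both bridges you flag as ``delicate'' are genuine gaps, and the tools you name would not close them. In (a)$\Rightarrow$(b), testing weak convergence against the functionals $f\mapsto\int_\Omega\langle x^*,f(t)\rangle h(t)\,d\mu$ only yields weak* convergence of the scalarizations $\langle x^*,w_i(\cdot)\rangle$ in $L_\infty(\mu)$, which is far weaker than the pointwise a.e.\ convergence demanded by (i); compare $\sin(nt)\to 0$ weak* but nowhere a.e. Koml\'os' theorem cannot repair this: it produces a.e.\ convergence of \emph{Ces\`aro means} of a subsequence, not of the subsequence itself, and averaging destroys exactly the subsequence structure that (b) requires. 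In (b)$\Rightarrow$(a), the evaluation functionals $f\mapsto\langle x^*,f(t)\rangle$ are not even well defined on $L_\infty(\mu,E)$, whose elements are equivalence classes modulo null sets; moreover the dual $L_\infty(\mu,E)^*$ contains singular, finitely additive components that no finite combination of point evaluations approximates, and you acknowledge this obstacle without offering a mechanism to overcome it. The missing device in both directions is lifting theory: a lifting $\rho$ of $L_\infty(\mu)$ selects representatives coherently across the uncountably many pairs $(x^*,t)$ (so that a \emph{single} null set $N$ suffices and evaluations become legitimate functionals), embeds the problem into a space of bounded functions with its pointwise topology, and is what allows the double-limit criterion to be invoked rigorously --- this is the route of Schl\"uchtermann's actual proof. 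Without a lifting, or an equivalent substitute, the null sets proliferate over uncountable families of functionals and points, and neither direction of your plan can be executed as written.
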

\noindent Let $(v_i)$ be any subsequence of $\left( F(u_n)\right)$ and $(w_i)$ any subsequence of $(v_i)$. Set $w=F(u)$, and pick any set $N\subset I$ with $|N|=0$. We already have proved that $w_i(t)\to w(t)$, for all $t\in I\setminus N$. Hence, condition (i) in Theorem \ref{thm:WC} is fulfilled.
Let now $(x^*_j)$ be any sequence in the unit ball $B_{E^*}$ of $E^*$, and let $(t_j)$ be any sequence of real numbers in $I\setminus N$. It is easy to check that
\[
\left|\langle x^*_j , \left( w_i - w\right)(t_j)\rangle\right|\leq \int_0^T \|f(s,u_{m_i}(s)) - f(s,u(s))\|_Eds
\]
where we are assuming that $w_i=F(u_{m_i})$ for all $i\in\mathbb{N}$. Using again Lebesgue's Dominated Convergence Theorem, it is easy to verify that
\[
\lim_{i\to\infty}\int_0^T \left\| f(s,u_{m_i}(s)) - f(s,u(s))\right\|_E ds=0
\]
which implies that
\[
\lim_{i\to\infty}\lim_{j\to\infty}\left| \langle x^*_j, \left( w_i - w\right)(t_j)\rangle\right|=0.
\]
Thus we get for free the assumption (ii) in Theorem \ref{thm:WC}, given the arbitrariness of the sequences $(x^*_j)$ and $(t_j)$. Hence $K$ is relatively weakly compact in $L_\infty(I,E)$, and the proof of Claim 2 is complete.

As is well known, Claim 2 implies that for some subsequence $(u_{n_k})$ of $(u_n)$ the sequence $(F(u_{n_k}))$ converges weakly to some $v$ in $L_\infty(I,E)$. Then, by Theorem 2.11 in \cite{Teixeira}, for almost every $t\in I$ we have that $F(u_{n_k})(t)\rightharpoonup v(t)$ in $E$. Since the weak topology is Hausdorff, it follows that $v\equiv F(u)$ a.e. in $I$. It is not hard to conclude that $F(u_n)$ converges weakly to $F(u)$ in $C(I,E)$. This proves that $F$ is demicontinuous.

\par

\paragraph{\it Step B.} $F$ has a weak-approximate fixed point sequence in $C$. To prove this, we need the following result as a crucial tool.

\begin{lemma}\label{lem:BKL} Let $X$ be a Banach space, $C\subset X$ a bounded convex set and $F\colon C\to \overline{C}$ a demicontinuous map. Assume that $C$ does not contain any isomorphic copy of $\ell_1$. Then there exists a sequence $(u_n)$ in $C$ so that $u_n - F(u_n)\rightharpoonup 0$ in $X$.
\end{lemma}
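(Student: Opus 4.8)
The plan is to read the conclusion as the assertion that $0$ lies in the \emph{weak sequential closure} of the defect set $S=\{u-F(u):u\in C\}$, and to produce the required sequence by coupling an approximate–fixed–point construction with Rosenthal's $\ell_1$–theorem. First I would isolate the single place where the hypothesis enters: since $C$ is bounded and contains no isomorphic copy of $\ell_1$, Rosenthal's dichotomy guarantees that every sequence in $C$ has a weakly Cauchy subsequence. I would then reduce the $F\colon C\to\overline C$ formulation to what is actually used, namely that $\overline C$ is bounded, so that $(F(u_n))$ and hence the defects $(u_n-F(u_n))$ stay bounded; the sequence we build will itself live in $C$, and demicontinuity of $F$ is only ever evaluated on points of $C$.

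Next I would manufacture candidate points by a damping device. Fixing $p\in C$, consider the contracted maps $F_n=(1-\tfrac1n)F+\tfrac1n p$, which again send $C$ into $\overline C$ and remain demicontinuous. If $u$ is an exact or approximate fixed point of $F_n$ with residual $r_n=u-F_n(u)$, then the identity
\[
u-F(u)=\tfrac1n\bigl(p-F(u)\bigr)+r_n
\]
shows that near–fixed–points of the $F_n$ with small residual produce defects tending to $0$. Here the convexity of $C$ is what keeps the relevant iterates admissible, and demicontinuity is what lets me pass weak limits through $F$: if $u_{n_k}\to u$ strongly along a subsequence, then $F(u_{n_k})\rightharpoonup F(u)$, which is exactly the input needed to identify the weak limit of the defects.

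The extraction step is then the heart of the matter. From the bounded family of candidates $(u_n)\subset C$ I would use Rosenthal's theorem to pass to a weakly Cauchy subsequence, whose weak${}^*$ limit functional in $X^{**}$ is the object against which the defects must be tested; combined with the vanishing factor $\tfrac1n$ in the displayed identity and with demicontinuity, this should pin the weak limit of $(u_{n_k}-F(u_{n_k}))$ to $0$ and deliver the weak–approximate fixed point sequence. Equivalently, one may invoke directly the abstract weak–approximate fixed point theorem for $\ell_1$–free bounded convex sets (the ``abstract fixed point approximation'' alluded to in the introduction) after checking its hypotheses against $C$ and $F$.

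The step I expect to be the main obstacle is precisely the passage from a net–level statement to a genuine weakly null \emph{sequence} of defects. Since $C$ is only bounded and convex, it need not be weakly compact, so no exact fixed point of the $F_n$ is available and the usual Schauder/Tychonoff route is closed; moreover $I-F$ is nonlinear, so one cannot simply replace the $u_n$ by convex combinations to force weak convergence. It is exactly here that $\ell_1$–freeness is indispensable: through Rosenthal it supplies weakly Cauchy subsequences as a surrogate for weak compactness, and a Simons–type $\sup$–$\limsup$ argument is the natural device for converting the information ``$0$ is a weak cluster value against each fixed functional'' into a single subsequence that is weakly null against \emph{all} functionals simultaneously. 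Demicontinuity is then used one final time to transport this limit through $F$ and close the argument.
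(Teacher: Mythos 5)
Your proposal has a genuine gap, and it sits exactly where you placed the weight of the argument: the damping device. The maps $F_n=(1-\tfrac1n)F+\tfrac1n p$ are only useful when $F$ is nonexpansive (then $F_n$ is a contraction and Banach's theorem supplies fixed points); for a map that is merely demicontinuous nothing forces $F_n$ to have exact or even approximate fixed points, and you never exhibit one. Worse, if ``approximate fixed point of $F_n$'' means small residual \emph{in norm}, the step is false in general: by the Lin--Sternfeld theorem, every noncompact closed bounded convex set --- for instance $B_{c_0}$, which contains no copy of $\ell_1$ --- carries a Lipschitz self-map $F$ with $\inf_{x}\|x-F(x)\|>0$. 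Since the $F_n$ converge to $F$ uniformly on $C$, norm-small residuals for the $F_n$ would give, via your displayed identity, a norm-approximate fixed point sequence for $F$, a contradiction. If instead the residuals are only required to be \emph{weakly} small, then producing them for $F_n$ is precisely the statement of the lemma applied to $F_n$, and the argument is circular. A second, independent gap: Rosenthal's theorem yields weakly Cauchy subsequences, but demicontinuity is norm-to-weak continuity, so it can only be applied along norm-convergent sequences; your step ``if $u_{n_k}\to u$ strongly along a subsequence, then $F(u_{n_k})\rightharpoonup F(u)$'' has no input to act on, because boundedness of $C$ gives no strongly convergent subsequences. The concluding ``Simons-type'' extraction is therefore a hope rather than an argument.

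For comparison, the paper does not prove the lemma at all: it is a one-line citation of Proposition 3.7 of Barroso--Kalenda--Lin (Math.\ Z.\ 271 (2012)). The architecture of that result is different from yours and avoids both obstacles above: one first shows that \emph{any} demicontinuous self-map of a bounded convex set admits a weak-approximate fixed point \emph{net}, by reducing to finite-dimensional convex sets through the functionals defining a weak neighborhood and applying Brouwer's theorem there (no compactness of $C$ and no fixed points of auxiliary maps are needed); only afterwards is the hypothesis that $C$ contains no $\ell_1$-sequence used, via Rosenthal's dichotomy and angelicity-type arguments, to upgrade the net to a genuine sequence. If you want a self-contained proof, that two-stage scheme --- Brouwer-based nets first, $\ell_1$-freeness only for the net-to-sequence step --- is the one to follow; the damping scheme should be abandoned.
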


\begin{proof} This is a direct consequence of Proposition 3.7 in \cite{Barroso-Kalenda-Lin}.
\end{proof}

Let us turn out the attention to the proof of Theorem \ref{thm:M1}. Since $I$ is not scattered and $X$ contains no isomorphic copy of $\ell_1$, by a result of Cembranos \cite{Cembranos}, it follows that $C(I,X)$ also cannot have any isomorphic copy of $\ell_1$. Thus, by Lemma \ref{lem:BKL}, there is a sequence $(u_n)$ in $C$ so that $u_n - F(u_n)\rightharpoonup 0$ in $C(I,E)$. This concludes the proof of Step B.

\vskip .2cm
\paragraph{\it Step C.} The sequence $(u_n)$ gained in {\it Step B} is a WAS for problem (\ref{eqn:1}). Indeed, conditions (i)-(ii) and (iv) of the Definition \ref{def:WAS} follow easily from the fact that the sequence $(u_n)$ belongs to $C$. Finally, after passing to a subsequence if needed, item (iii) is a direct consequence of Rosenthal's $\ell_1$-theorem \cite{Rosenthal} and the fact that $u_n - F(u_n)\rightharpoonup 0$ in $C(I,E)$. The proof of Step C is complete, and the proof of Theorem \ref{thm:M1} is concluded. \hfill $\square$

\section{The algebraic genericity for the WFPT in Banach spaces}
In this section we establish the algebraic genericity of the weak form of Peano's theorem in Banach spaces having complemented subspaces with unconditional Schauder basis.
\subsection{Proof of Theorem \ref{thm:M3}}
According to the assumption, there exists a complemented subspace $X$ of $E$ with an unconditional Schauder basis $\{e_n;e^*_n\}_{n=1}^\infty$. As $X$ is complemented, there exists a bounded linear projection $P$ of $E$ onto $X$ (cf. \cite{Fabian}).  Split $\mathbb{N}$ into $\mathbb{N}=\bigcup_{i\geq 1}\mathbb{N}_{i}$, where each $\mathbb{N}_i$ has cardinality $|\mathbb{N}_{i}|=\infty$ and $\mathbb{N}_{i}\cap \mathbb{N}_{j}=\emptyset$ if $i\neq j$. We shall use the convention $\mathbb{N}_0=\mathbb{N}$. Let $X_i=\overline{\spann}\{e_n\colon n\in\mathbb{N}_{i}\}$. We then define, for each $i\in\mathbb{N}$, the $i$-th projection $\pi_i$ from $X$ into $X_i$ by
\begin{equation}\label{eqn:iprojection}
\pi_i(x)=\sum_{n=1}^\infty \big( e^*_{\mathbb{N}_{i}}\big)_n(x)e_n,\quad x\in X
\end{equation}
where $\big( e^*_{\mathbb{N}_{i}}\big)_n(x)=e_n^*(x)$ if $n\in\mathbb{N}_{i}$ and $0$ else. Since $\{ e_n\colon n\in\mathbb{N}_i\}$ is an unconditional Schauder basis for $X_i$ (see \cite[Proposition 6.31]{Fabian}), we have that $\pi_i(x)$ is well-defined and $\sup_{i\in\mathbb{N}}\|\pi_i\|<\infty$. Moreover, notice that $X_i=\pi_i(X)$. Now a crucial ingredient which will be used below is contained in the following lemma, whose proof can be found in \cite[Corollary 1.5]{Shkarin2}.

\begin{lemma}\label{lem:Shkarin} Let $X$ be a Banach space with a complemented subspace which has an unconditional basis. Then for any $\alpha\in (0,1)$ and $\epsilon>0$, there exists $f\colon X\to X$ such that
\begin{itemize}
\item[(i)] $\|f(x) - f(y)\|\leq \epsilon \|x - y\|^\alpha$ for all $x,y\in X$.
\item[(ii)] equation $u'=f(u)$ has no solutions in any interval of the real line.
\end{itemize}
\end{lemma}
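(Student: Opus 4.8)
The plan is to reduce the statement to the case in which the whole space carries an unconditional basis, then to build the field coordinatewise so that its integral curves are forced to leave the space, the delicate point being to keep the field a genuine self‑map. Concretely, let $Y\subset X$ be a complemented subspace with an unconditional (after renormalizing, normalized) basis and let $P\colon X\to Y$ be a bounded projection, as in the notation preceding the lemma. Suppose we have produced $g\colon Y\to Y$ that is $\alpha$-H\"older with constant $\epsilon'$ and for which $v'=g(v)$ has no solution on any interval. Set $f=g\circ P\colon X\to X$. Since $g$ takes values in $Y$ and $P$ restricts to the identity on $Y$, any solution $u$ of $u'=f(u)=g(Pu)$ would yield, after applying the bounded operator $P$, a solution $v:=Pu$ of $v'=Pg(Pu)=g(Pu)=g(v)$, a contradiction. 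Moreover $\|f(x)-f(y)\|\le \epsilon'\|Px-Py\|^\alpha\le \epsilon'\|P\|^\alpha\|x-y\|^\alpha$, so taking $\epsilon'=\epsilon\|P\|^{-\alpha}$ delivers the prescribed H\"older constant. Everything thus reduces to the construction of $g$ on $Y$.

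For the model of $g$ I would imitate Dieudonn\'e's classical example in $c_0$. Writing $y=\sum_n y_ne_n$, look for a diagonal field $g(y)=\sum_n\phi_n(y_n)e_n$, so that the equation decouples into scalar equations $y_n'=\phi_n(y_n)$, with each $\phi_n$ a strictly positive sublinear profile of the type $\phi_n(s)=\epsilon\,\psi(s)+p_n$, where $\psi(s)\sim|s|^\alpha$ near $0$ and $p_n>0$ is a small push. The push makes every coordinate cross $0$ and turn positive, and once positive the inequality $y_n'\ge \epsilon\,y_n^\alpha$ (available because $\alpha<1$) drives $y_n$ off $0$ at a rate whose integral yields a lower bound $y_n(t)\ge v(t)>0$ that is \emph{independent of} $n$ for each $t>0$. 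Because the coordinates of the starting point lie in $Y$ and hence tend to $0$, for all large $n$ this escape occurs within any prescribed subinterval; fixing a $t$ there, a hypothetical solution would satisfy $\inf_n y_n(t)\ge v(t)>0$, incompatible with $y(t)\in Y$. This refutes solutions on any interval through any point, and the prefactor $\epsilon$ keeps the H\"older constant of $\psi$ as small as wanted.

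The main obstacle, and the step that genuinely forces one to use unconditionality, is that this diagonal field need not map $Y$ into $Y$: for $g(y)\in Y$ one needs $\big(\phi_n(y_n)\big)_n$ to be the coordinate sequence of an element of $Y$ for \emph{every} $y\in Y$, but the sublinear term $|y_n|^\alpha$ decays more slowly than $y_n$, and in $\ell_2$ this already fails (for $y_n=n^{-0.6}$ one has $y\in\ell_2$ while $(|y_n|^\alpha)\notin\ell_2$ once $\alpha\le 5/6$). Capping the nonlinearity at a fast‑decaying level $r_n\to 0$ restores the self‑map property but destroys the $n$-independent bound $v(t)$, so the two demands are in real tension. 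The way I would reconcile them is to renormalize using unconditionality: distribute the escape over \emph{blocks} of coordinates and multiply the active amplitudes by a bounded, coordinate‑dependent multiplier sequence, so that by the unconditional multiplier estimate $\|\sum_n\lambda_nz_ne_n\|\le K\sup_n|\lambda_n|\,\|\sum_n z_ne_n\|$ the field stays in $Y$ with controlled norm, while the escape is gauged against a fixed rapidly‑decaying reference vector whose coordinates, raised to the power $\alpha$, still form an element of $Y$. Checking simultaneously that this renormalized field is a bona fide map $Y\to Y$, is uniformly $\alpha$-H\"older with constant $\le\epsilon$, and still expels every integral curve from $Y$ is the heart of the matter; this is exactly what is secured by the construction of \cite[Corollary 1.5]{Shkarin2}, on which we rely.
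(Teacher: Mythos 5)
Your proposal is correct and in substance identical to the paper's treatment: the paper offers no proof of this lemma at all, simply quoting it from \cite[Corollary 1.5]{Shkarin2}, and your argument likewise defers the essential construction to that same corollary, after a correct (though strictly unnecessary, since Shkarin's statement already covers the complemented-subspace case) reduction $f = g\circ P$ with $\epsilon' = \epsilon\|P\|^{-\alpha}$. The Dieudonn\'e-style diagonal sketch in between is honest motivation for why unconditionality matters, but the heart of the matter in both your write-up and the paper rests on the identical citation.
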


Fixe $\alpha\in (0,1)$. According to Lemma \ref{lem:Shkarin}, for each $i\in\mathbb{N}$ there exists a vector field $f_i\colon X_i\to X_i$ satisfying properties $(i)$ and $(ii)$ above. Let $h_i\colon E\to E$ be a continuous vector field given by the formula
\[
h_i(x)=f_i\big( \pi_i(Px) \big),\quad x\in E.
\]
Now for each $(a_n)\in \ell_1$ we define another vector field $f_{(a_n)}$ on $E$ by putting
\[
f_{(a_n)}(x)=\sum_{i=1}^\infty a_i h_i(x),\quad x\in E.
\]
A direct computation shows that $f_{(a_n)}\equiv 0$ iff $(a_n)=0$ and
\[
\| f_{(a_n)}(x) - f_{(a_n)}(y)\|\leq \sup_{i\in\mathbb{N}}\|\pi_i\|\|P\| \| (a_n)\|_{\ell_1} \|x - y\|^\alpha,\quad \forall x,y\in E.
\]
With the same reasoning we can prove also the following proposition.
\begin{proposition} The linear map $T\colon \ell_1 \to C(E)$ given by
\[
T( (a_n)) = f_{(a_n)}
\]
is well-defined and is injective and continuous.
\end{proposition}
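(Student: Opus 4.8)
The plan is to verify the three assertions separately, reusing the two facts recorded immediately before the statement: that $f_{(a_n)}\equiv 0$ precisely when $(a_n)=0$, and that $f_{(a_n)}$ is uniformly $\alpha$-Hölder with constant proportional to $\|(a_n)\|_{\ell_1}$. Linearity of $T$ is immediate, since $f_{(a_n)}(x)=\sum_{i}a_i h_i(x)$ depends linearly on the coefficient sequence. For well-definedness I would first show that the defining series converges in $E$ for each fixed $x$. Combining the uniform bound on the increments, \[ \|h_i(x)-h_i(0)\|=\|f_i(\pi_i Px)-f_i(0)\|\le \epsilon\,(\sup_i\|\pi_i\|)^\alpha\|P\|^\alpha\|x\|^\alpha, \] with a uniform bound $M:=\sup_i\|h_i(0)\|=\sup_i\|f_i(0)\|<\infty$ on the base values, I obtain $\sup_i\|h_i(x)\|\le M+c\,\|x\|^\alpha<\infty$ for a constant $c$, whence $\sum_i|a_i|\,\|h_i(x)\|\le(\sup_i\|h_i(x)\|)\|(a_n)\|_{\ell_1}<\infty$ and the series converges absolutely. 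The limit $f_{(a_n)}$ is $\alpha$-Hölder by the estimate displayed above the proposition, hence continuous, and the same estimate anchored at $0$ shows it is bounded on each bounded set; thus $f_{(a_n)}\in C(E)$ and $T$ is well defined.

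Injectivity then costs nothing: if $T((a_n))=0$ then $f_{(a_n)}\equiv 0$, and by the equivalence already established this forces $(a_n)=0$, so $\ker T=\{0\}$. For continuity I would work at the level of the generating seminorms of $\mathscr{T}_{uc}$, namely $p_B(g)=\sup_{x\in B}\|g(x)\|$ indexed by the bounded sets $B\subset E$, so that it suffices to dominate each $p_B\circ T$ by a multiple of $\|\cdot\|_{\ell_1}$. Fixing $B\subset R\,B_E$ and using the anchor at $0$ together with the Hölder estimate, for $x\in B$ one gets $\|f_{(a_n)}(x)\|\le \|f_{(a_n)}(0)\|+L\|(a_n)\|_{\ell_1}\|x\|^\alpha\le (M+LR^\alpha)\|(a_n)\|_{\ell_1}$, where $L$ is the Hölder constant from the displayed inequality and $\|f_{(a_n)}(0)\|\le M\|(a_n)\|_{\ell_1}$. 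Hence $p_B(T((a_n)))\le (M+LR^\alpha)\|(a_n)\|_{\ell_1}$, and since every generating seminorm of the target is dominated under $T$ by a constant times the $\ell_1$-norm, $T$ is continuous.

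The one genuine obstacle is the uniform bound $M=\sup_i\|f_i(0)\|<\infty$ used as an anchor: the Hölder property supplied by Lemma \ref{lem:Shkarin} controls only the increments $f_i(x)-f_i(y)$, so neither the pointwise convergence of $\sum_i a_i h_i(x)$ nor the membership of $f_{(a_n)}$ in $C(E)$ can be read off from it alone. I would secure this by choosing the fields $f_i$ uniformly across the blocks $X_i$; inspecting the construction in \cite{Shkarin2} one may normalize so that $f_i(0)=0$ (equivalently, arrange $\sup_i\|f_i(0)\|<\infty$), after which $M$ is finite and the remainder of the argument is routine. Note that one cannot cheaply force $f_i(0)=0$ by subtracting the constant $f_i(0)$, since replacing $f_i$ by $f_i-f_i(0)$ turns the relevant autonomous equation into a non-autonomous one and need not preserve the absence of solutions — hence the normalization must genuinely come from the construction itself.
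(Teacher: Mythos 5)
Your argument follows the route the paper itself intends: the paper offers no written proof beyond ``with the same reasoning,'' meaning the two displayed facts (the equivalence $f_{(a_n)}\equiv 0\iff(a_n)=0$ for injectivity, and the uniform H\"older estimate for continuity with respect to the seminorms $p_B(g)=\sup_{x\in B}\|g(x)\|$ generating $\mathscr{T}_{uc}$), and your seminorm computation is exactly that sketch carried out. Moreover, you correctly isolate the one point the paper glosses over: Lemma~\ref{lem:Shkarin} controls only the increments $f_i(x)-f_i(y)$, so for the series $\sum_i a_i h_i(x)$ to converge for \emph{every} $(a_n)\in\ell_1$ one needs $\sup_i\|f_i(0)\|<\infty$; indeed, requiring $\sum_i|a_i|\,\|f_i(0)\|<\infty$ for all $(a_n)\in\ell_1$ forces $(\|f_i(0)\|)_i\in\ell_\infty$. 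This is a genuine gap in the paper as written, and you deserve credit for finding it.

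However, your resolution of that gap is unsatisfactory on two counts. First, your reason for rejecting the normalization $f_i\mapsto f_i-f_i(0)$ is wrong: the equation $u'=f_i(u)-f_i(0)$ is still autonomous, since its right-hand side has no explicit $t$-dependence. The correct objection is that non-existence of solutions for $u'=f_i(u)$ does not transfer to $u'=f_i(u)-c$; adding a constant to the field produces a genuinely different equation with no change of variables linking the two. Second, ``inspecting the construction in \cite{Shkarin2}'' is left unverified, and it is unnecessary: a time rescaling closes the gap using only the statement of Lemma~\ref{lem:Shkarin}. Put $\lambda_i=\min\{1,\|f_i(0)\|^{-1}\}$ (with $\lambda_i=1$ when $f_i(0)=0$) and $g_i:=\lambda_i f_i$. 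If $u$ solves $u'=g_i(u)$ on an interval $I$, then $w(t):=u(t/\lambda_i)$ solves $w'=f_i(w)$ on the interval $\lambda_i I$, so $u'=g_i(u)$ has no solutions on any interval either; moreover $g_i$ is $(\lambda_i\epsilon)$-H\"older, hence $\epsilon$-H\"older, and $\|g_i(0)\|\leq 1$. Replacing each $f_i$ by $g_i$ makes your anchor constant $M\leq 1$, after which the rest of your argument goes through verbatim.
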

We can conclude therefore that $T(\ell_1)$ is algebraically isomorphic to $\ell_1$. Now we claim that
\begin{equation}\label{eqn:lineability}
T(\ell_1)\subset \mathscr{K}(E)\cup \{0\}.
\end{equation}
Indeed, let $(a_n)\in\ell_1\setminus \{0\}$ be arbitrary. Then $a_m\neq 0$ for some integer $m\geq 1$. Assume the contrary that $T( (a_n) )\not\in \mathscr{K}(E)$. Then there exists some open interval $I\subset \mathbb{R}$ so that the ODE
\begin{equation}\label{Eqn:US}
u'(t) = f_{(a_n)}(u(t))
\end{equation}
has a solution on $I$, say $u$. Let us define a vector-valued function $v\colon I\to E_m$ by $v(t)=(\pi_m\circ P)( u(t/a_m))$. Taking into account that (\ref{Eqn:US}) is an uncoupled system of infinite ODEs, after projecting and making the calculations of derivatives, we get
\[
v'(t)= f_m(v(t))\quad\forall t\in I
\]
which is a contradiction with the fact that for the field $f_m$ the WFPT fails in $E_m$. This concludes the proof of inclusion (\ref{eqn:lineability}). It remains to prove the following more refined inclusion
\[
\overline{T(\ell_1)}^{\mathscr{T}_{uc}}\subset\mathscr{K}(E)\cup \{0\}.
\]
To see this, let $h\in \overline{T(\ell_1)}^{\mathscr{T}_{uc}}$ be arbitrary. Then there is a sequence $x_k=(a_n^k)_{n=1}^\infty\in\ell_1$ so that $\{ \sum_{n=1}^\infty a_n^k f_n(\pi_n(x))\}_{k=1}^\infty$ converges uniformly on $B_E$ to $h$. Thus it is also Cauchy in $E$, uniformly in $x\in B_E$. Since each projection $\pi_j$ maps Cauchy sequences into Cauchy sequences, $(x_k)$ is Cauchy in $\ell_1$. Therefore we can pick $x=(a_n)\in \ell_1$ so that $\| x_k - x\|_{\ell_1}\to 0$, and as $\mathscr{T}_{uc}$ is Hausdorff we have $T(x)=h$. This completes the proof of theorem.
\hfill $\square$

\bigskip
\noindent{\bf Acknowledgment.} The research described in this paper was started while the first author was visiting Universidade Federal do Amazonas, during the period August-November, 2011, and finished during the 17th Differential Geometry School held at Manaus, AM. He wishes to thank Professor Renato Tribuzy and all the other members at the Department of Mathematics for their hospitality. The authors wish also to thank Professor Wies{\l}aw \'{S}liwa for kindly sending us a copy of his paper \cite{Sliwa}. This work is partially supported by CNPq-Brazil, Grant 307210/2009-0.

\vskip1truecm

\scshape

\noindent Cleon S. Barroso and Michel P. Rebou\c cas

\noindent Universidade Federal do Cear\'a

\noindent Departmento de Matem\'atica

\noindent Av. Humberto Monte S/N, 60455-760, Bl 914

\noindent E-mail address: \qquad {\tt cleonbar@mat.ufc.br}\quad and \quad {\tt michelufc@yahoo.com.br}

\scshape
\bigskip

\noindent Marcus A. M. Marrocos

\noindent Universidade Federal do Amazonas

\noindent Departamento de Matem\'atica, ICE

\noindent Av. Rodrigo Ot\'avio Jord\~ao Ramos, 3000, 3077-000

\noindent E-mail address: \qquad {\tt marcusmarrocos@gmail.com}

\end{document}